\theoremstyle{plain}
\newtheorem{theorem}{Theorem}
\newtheorem{lemma}[theorem]{Lemma}
\theoremstyle{definition}
\newcommand{\CC}{{\mathcal C}}
\newcommand{\CF}{{\mathcal F}}
\newcommand{\CS}{{\mathcal S}}
\newcommand{\Hom}{\operatorname{Hom}}
\begin{document}

\title[Direct products and the contravariant hom-functor]{Direct products and the contravariant hom-functor}
\author{Simion Breaz}
\address{Babe\c s-Bolyai University, Faculty of Mathematics
and Computer Science, Str. Mihail Kog\u alniceanu 1, 400084
Cluj-Napoca, Romania}
\email{bodo@math.ubbcluj.ro}


\subjclass{16D90, 18A23, 18A30, 08A10, 08B25, 20K30}

\thanks{ }

\begin{abstract}
We prove in ZFC that if $G$ is a (right) $R$-module such that the
groups $\Hom_R(\prod_{i\in I}G_i,G)$ and $\prod_{i\in
I}\Hom_R(G_i,G)$ are naturally isomorphic for all families of
$R$-modules $(G_i)_{i\in I}$ then $G=0$. The result is valid even
we restrict to families such that $G_i\cong G$ for all $i\in I$.
 \end{abstract}

\keywords{direct product, hom-functor, pure-injective module}

\maketitle
\date{}

\section{Introduction}

It is well known that if $\CC$ is a category and $G$ is an object
in $\CC$ then the hom-functors $\Hom_\CC(-,G)$ and
$\Hom_\CC(G,-)$, from $\CC$ into the category of sets, are very
useful tools. For instance, the universal property of a direct
product can be described by the fact that all covariant
hom-functors $\Hom_\CC(G,-)$ preserve direct products, \cite[pp.70
and 117]{ML}: for any $G$ and any family $\CF=(G_i)_{i\in I}$ of
objects in $\CC$ such that there exists the (direct) product
$\prod_{i\in I}G_i$, there is natural bijection
$$\Phi_\CF:\Hom_\CC(G,\prod_{i\in I}G_i)\to \prod_{i\in
I}\Hom_\CC(G,G_i).$$  If $\CC$ has a null object (e.g. $\CC$ is
the category of pointed sets, of groups, of pointed spaces or a
category of modules) there are also canonical maps $u_i:G_i\to
\prod_{i\in I}G_i$ for all $i\in I$. The family
$\overline{u}_i=\Hom_R(u_i,G)$, $i\in I$, induces a natural
homomorphism $$\Psi_\CF:\Hom_\CC(\prod_{i\in I}G_i,G)\to
\prod_{i\in I}\Hom_\CC(G_i,G).$$ It is easy to see that in general
$\Psi_\CF$ is not a bijection (e.g. for pointed sets or for vector
spaces), and it is a natural question whether we can add
conditions on $G$ such that the induced maps $\Psi_\CF$ are
isomorphisms for all families $\CF$.

In the following we will prove (in ZFC) that for module categories
the only condition we can put is the trivial one, $G=0$. This
result is valid even if we restrict to all families $\CF$ which
consists in modules which are isomorphic copies of $G$. A similar
theorem was proved for abelian groups by Goldsmith and Kolman in
\cite[Theorem 3.3]{GK07} under an additional set theoretic
hypothesis (there exists a strongly compact cardinal), and our
result provides an answer to the questions from the end of
\cite{GK07}: Does ZFC suffice to prove that for every non-zero
abelian group $G$ there is a family $(G_i)_{i\in I}$ of abelian
groups (eventually with $G_i\cong G$ for all $i\in I$) such that
$\Hom(\prod_{i\in I}G_i,G)$ and $\prod_{i\in I}\Hom(G_i,G)$ are
not isomorphic?

The answer presented here is satisfactory since other commuting
properties of hom-functors are studied only in similar settings.
For instance the main characterizations for (self-)small modules
(see \cite[Proposition 1.1]{AM75}), using ascending chains of
submodules, are given for the hypothesis that the natural
homomorphism $\bigoplus_{i\in I}\Hom_R(G,G_i)\to
\Hom_R(G,\bigoplus_{i\in I} G_i)$ is an isomorphism, while slender
modules are characterized in a similar way in \cite[Corollary
III.1.5]{EM02}.

\section{Modules, $G$, such that $\Hom(-,G)$ preserves direct products}

Let $R$ be a unital ring, $G$ an $R$-module and $\CF=(G_i)_{i\in
I}$ a family of modules. As before, we denote by $u_i:G_i\to
\prod_{i\in I} G_i$ the canonical injections and by
$$\Psi_\CF:\Hom_R(\prod_{i\in I} G_i,G)\to \prod_{i\in I}
\Hom_R(G_i,G)$$ the natural homomorphism induced by the family
$\overline{u}_i=\Hom_R(u_i,G)$, $i\in I$. Following the
terminology used in \cite{GK07}, we say that $G$ is {\sl naturally
cosmall} if $\Psi_\CF$ is an isomorphism for all families $\CF$.
If $\Psi_\CF$ is an isomorphism for all families $\CF$ which
consist of isomorphic copies of $G$, then $G$ is called {\sl
naturally self-cosmall}. We will prove that the only naturally
self-cosmall module (hence the only naturally cosmall module) is
the trivial module $0$. In order to do this we start with a
technical lemma.

\begin{lemma}
Let $G$ be an $R$-module. If $\CF=(G_i)_{i\in I}$ is a family of
$R$-modules, $\varphi:\bigoplus_{i\in I}G_i\to \prod _{i\in I}G_i$
is the natural homomorphism and
$$\Upsilon_\CF:\Hom_R(\bigoplus_{i\in I}G_i, G)\to \prod_{i\in
I}\Hom_R(G_i,G)$$ is the natural isomorphism induced by the
canonical injections $v_i:G_i\to\bigoplus_{i\in I}G_i$, then
$$\Psi_\CF=\Upsilon_\CF\Hom_R(\varphi,G).$$
\end{lemma}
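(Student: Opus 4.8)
The plan is to reduce the statement entirely to the universal property of the direct product $\prod_{i\in I}\Hom_R(G_i,G)$ together with a single elementary relation between the two kinds of canonical injections. First I would record that relation. Writing $v_i\colon G_i\to\bigoplus_{i\in I}G_i$ for the coproduct injections and $u_i\colon G_i\to\prod_{i\in I}G_i$ for the product injections, the fact that $\varphi$ is \emph{the} canonical map from the coproduct to the product means precisely that $\varphi v_i=u_i$ for every $i\in I$ (this is immediate from the componentwise description of $\varphi$, and it holds on the nose, not merely up to isomorphism). Applying the contravariant functor $\Hom_R(-,G)$ and using functoriality then gives $\overline{u}_i=\Hom_R(u_i,G)=\Hom_R(v_i,G)\,\Hom_R(\varphi,G)=\overline{v}_i\,\Hom_R(\varphi,G)$ for each $i\in I$.

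Next I would bring in the universal property. Let $p_i\colon\prod_{i\in I}\Hom_R(G_i,G)\to\Hom_R(G_i,G)$ denote the $i$-th projection. By the very definition of $\Psi_\CF$ as the homomorphism induced by the family $(\overline{u}_i)_{i\in I}$ we have $p_i\Psi_\CF=\overline{u}_i$ for all $i$, and likewise $p_i\Upsilon_\CF=\overline{v}_i$ for all $i$ by the definition of $\Upsilon_\CF$. Combining this with the identity from the previous paragraph, $p_i\bigl(\Upsilon_\CF\,\Hom_R(\varphi,G)\bigr)=\overline{v}_i\,\Hom_R(\varphi,G)=\overline{u}_i=p_i\Psi_\CF$ for every $i\in I$. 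Since a homomorphism into a direct product is uniquely determined by its composites with the projections, we conclude $\Psi_\CF=\Upsilon_\CF\,\Hom_R(\varphi,G)$, which is the assertion.

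There is essentially no obstacle here: the lemma is a naturality bookkeeping exercise, and the only point that needs a moment's care is to keep straight which family of injections induces which map — the product injections $u_i$ give $\Psi_\CF$, the coproduct injections $v_i$ give the isomorphism $\Upsilon_\CF$ — and to be sure the factorization $\varphi v_i=u_i$ is used in the correct direction when passing through the contravariant $\Hom$. I would also remark in passing that $\Upsilon_\CF$ being an isomorphism is the classical fact that $\Hom_R(-,G)$ turns coproducts into products, so nothing beyond standard module theory is invoked.
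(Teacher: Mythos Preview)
Your proof is correct and is essentially the same as the paper's: both use the relation $\varphi v_i=u_i$, apply functoriality of $\Hom_R(-,G)$, and then invoke uniqueness of maps into the product $\prod_{i\in I}\Hom_R(G_i,G)$ via the projections. The only difference is cosmetic ordering---you derive $\overline{u}_i=\overline{v}_i\,\Hom_R(\varphi,G)$ first and then compose with the projections, whereas the paper records the projection identities first and then uses functoriality.
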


\begin{proof}
Let $\pi_i:\prod_{i\in I}\Hom_R(G_i,G)\to \Hom_R(G_i,G)$ be the
canonical projections. The standard proof, \cite[Theorem
43.1]{F70}, of the isomorphism $\Hom_R(\bigoplus_{i\in
I}G_i,G)\cong \prod_{i\in I}\Hom_R(G_i,G)$ shows that
$\Hom_R(v_i,G)=\pi_i\Upsilon_\CF$ for all $i\in I$.

Similarly, $\Hom(u_i,G)=\pi_i\Psi_{\CF}$. Now $\varphi v_i=u_i$,
so that, $\Hom(v_i,G)\Hom(\varphi,G)=\Hom(u_i,G)$, that is,
$\pi_i\Upsilon_\CF \Hom(\varphi,G)=\pi_i \Psi_\CF$ for all $i\in
I$. The conclusion follows immediately.
\end{proof}

\begin{theorem}
The following are equivalent for a module $G$: \begin{enumerate}
\item $G$ is naturally cosmall; \item $G$ is a naturally
self-cosmall module; \item $G=0$.
\end{enumerate}
\end{theorem}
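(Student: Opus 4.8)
The implications $(1)\Rightarrow(2)$ and $(3)\Rightarrow(1)$ are trivial (the zero module makes every hom-group zero), so the whole content lies in $(2)\Rightarrow(3)$. The plan is to assume $G\neq 0$ and, using the family $\CF=(G_i)_{i\in I}$ with $G_i=G$ for all $i$ in a suitably chosen infinite index set $I$, produce a homomorphism $f:\prod_{i\in I}G\to G$ that does \emph{not} factor through the natural map $\varphi:\bigoplus_{i\in I}G\to\prod_{i\in I}G$. By the Lemma, $\Psi_\CF=\Upsilon_\CF\Hom_R(\varphi,G)$ with $\Upsilon_\CF$ an isomorphism, so $\Psi_\CF$ is an isomorphism precisely when $\Hom_R(\varphi,G)$ is; thus exhibiting such an $f$ (or, on the surjectivity side, an element of $\prod_i\Hom_R(G,G)$ not in the image) shows $G$ is not naturally self-cosmall. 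I would treat this as the restriction-of-scalars reduction to $\Z$: forgetting the $R$-action, a non-zero $R$-module is in particular a non-zero abelian group, and a non-additive-group splitting already breaks $\Psi_\CF$; so it suffices to handle $G$ as an abelian group, i.e. to prove the statement for $R=\Z$.

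The key step is a dichotomy on the non-zero abelian group $G$. First, if $G$ is \emph{not} reduced-and-torsion-free-cotorsion-free — more concretely, if $G$ is not slender — then by the standard theory (Nunke's criterion; see \cite[Ch.\ III]{EM02}) $G$ contains a copy of $\Z$, $\Q$, $\Z/p\Z$ for some prime $p$, or $\Z_p$ (the $p$-adic integers), and for at least one of these the projection-type map $\prod_{i\in\omega}G\to G$ killing the direct sum is nonzero — indeed the Baer–Specker group $\prod_{i\in\omega}\Z$ admits homomorphisms to $\Z$ vanishing on $\bigoplus\Z$ only in the trivial way, but to $\Z/p\Z$ or to $\Z_p$ one gets genuine non-factoring maps via a completion/limit argument. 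Second, if $G$ \emph{is} slender, then $G$ is torsion-free of cardinality bounded by the first measurable cardinal, and here I would instead attack \emph{surjectivity} of $\Psi_\CF$: choose $I=\omega$ and the family $(\mathrm{id}_G)_{i\in\omega}\in\prod_{i\in\omega}\Hom(G,G)$; a preimage under $\Psi_\CF$ would be a homomorphism $\prod_{i\in\omega}G\to G$ restricting to the identity on each factor, i.e.\ to the sum on $\bigoplus G$, and one shows (again by slenderness applied to the sub-product of a single coordinate direction, together with a diagonal/almost-disjoint argument on supports) that no such homomorphism exists when $G\neq0$.

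The main obstacle I anticipate is doing the slender case in pure ZFC — this is exactly where \cite{GK07} needed a strongly compact cardinal. The trick to avoid large cardinals is presumably not to look at $\Hom(\prod G,G)$ for a single product, but to exploit that $\Psi_\CF$ must be an isomorphism for \emph{every} family simultaneously: one feeds in a cleverly large index set $I$ (e.g.\ $I$ of cardinality $\geq 2^{|G|}$ or an iterated product) so that a counting/cardinality comparison between $|\Hom_R(\prod_{i\in I}G,G)|$ and $|\prod_{i\in I}\Hom_R(G,G)|$ — or between the image of $\Hom_R(\varphi,G)$ and the whole target — yields a contradiction outright, independent of any measurability assumption. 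Concretely, $|\prod_{i\in I}\Hom(G,G)|\geq 2^{|I|}$, while every homomorphism $\prod_{i\in I}G\to G$ that factors through $\bigoplus_{i\in I}G$ is determined by its restrictions to the $|I|$ coordinate copies, but the factoring map $\bigoplus_{i\in I}G\to G$ need not extend, so controlling $\#\{\text{maps through }\varphi\}$ versus $|\prod_{i\in I}\Hom(G,G)|$ and pushing $|I|$ past the point where these can match should close the argument. Verifying that this cardinal gap is genuine — that $\Hom_R(\varphi,G)$ cannot be surjective for large $I$ — is the crux, and I expect it to use the Łoś-type/ultraproduct structure of $\prod_{i\in I}G$ together with the fact that a non-zero $G$ has $\Hom(\Z,G)=G\neq0$, so the target coordinates cannot all be absorbed.
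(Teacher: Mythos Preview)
Your proposal has two genuine gaps. The reduction to $R=\Z$ is not justified: in the non-slender branch you argue that injectivity of $\Psi_\CF$ fails by exhibiting a nonzero $\Z$-linear map $\prod_\omega G\to G$ killing $\bigoplus_\omega G$, but such a map need not be $R$-linear, so this does not break the $R$-module $\Psi_\CF$. More seriously, the slender branch is not a proof at all --- you explicitly acknowledge that this is where \cite{GK07} needed a strongly compact cardinal, and the cardinality heuristic you offer (``push $|I|$ past the point where these can match'') is just a hope. Controlling $\Hom(\prod_I G,G)$ for $|I|$ above the first measurable is exactly the difficulty, and nothing you write addresses it.

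The paper's argument avoids all of this with a single idea that works uniformly over any ring. If $\Psi_\CF$ is an isomorphism for every family of copies of $G$, then by the Lemma $\Hom_R(-,G)$ keeps every sequence $0\to G^{(\lambda)}\to G^\lambda\to G^\lambda/G^{(\lambda)}\to 0$ exact; a standard criterion (\cite[1.2.19(e)]{GT06}) then forces $G$ to be \emph{pure-injective}, and simultaneously $\Hom_R(G^\lambda/G^{(\lambda)},G)=0$. Now look at the diagonal copy $H=\{(g)_{\kappa<\lambda}:g\in G\}\subseteq G^\lambda$: one checks directly that $(H+G^{(\lambda)})/G^{(\lambda)}\cong G$ is a \emph{pure} submodule of $G^\lambda/G^{(\lambda)}$, hence (by pure-injectivity of $G$) a direct summand. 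If $G\neq 0$ this produces a nonzero homomorphism $G^\lambda/G^{(\lambda)}\to G$, a contradiction. There is no slender/non-slender dichotomy and no cardinality counting; the whole weight is carried by pure-injectivity plus the diagonal embedding --- both of which are absent from your plan.
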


\begin{proof} Only (2)$\Rightarrow$(3) requires a proof.
Let $G$ be a naturally self-cosmall module. For every cardinal
$\lambda$ we consider the canonical exact sequence $$0\to
G^{(\lambda)}\overset{\varphi}\to G^\lambda\to
G^\lambda/G^{(\lambda)}\to 0.$$ Applying the previous lemma, it
follows that $\Hom_R(\varphi,G)$ is an isomorphism, hence the
exactness of these sequences are preserved if we apply the functor
$\Hom_R(-,G)$. Then $G$ is pure-injective as a consequence of
\cite[1.2.19(e)]{GT06}.

Moreover, $\Hom_R(G^\lambda/G^{(\lambda)},G)=0$ for all $\lambda$
and we will prove that this is possible only if $G=0$.

Fix $\lambda$ an infinite ordinal. We observe that the set
$H=\{(g)_{\kappa<\lambda}\mid g\in G\}$ of constant functions
$$\lambda=\{\kappa\mid \kappa<\lambda\}\to G,\ \kappa\mapsto g,$$ is a
submodule in $G^\lambda$, and we claim that
$(H+G^{(\lambda)})/G^{(\lambda)}$ is pure in
$G^\lambda/G^{(\lambda)}$. In order to prove this claim we
consider a system of equations
$$(\CS)\ \ \sum_{j=1}^{k} a_{ij} X^j = (g^i)_{\kappa<\lambda}+G^{(\lambda)},
\ i = 1, \dots , n,\ a_{ij} \in
R,\ (g^i)_{\kappa<\lambda} \in H,\ n, k \in \mathbb{N},$$ 
in $G^\lambda/G^{(\lambda)}$. Suppose that the tuple
$$\left(\overline{x}^1=(x_{\kappa}^1)_{\kappa<\lambda}+G^{(\lambda)},\dots,
\overline{x}^k=(x_{\kappa}^k)_{\kappa<\lambda}+G^{(\lambda)}\right)$$
represents a solution for $(\CS)$ in $G^\lambda/G^{(\lambda)}$.
Then for every $i = 1, \dots , n$ the equalities $$\sum_{j=1}^{k}
a_{ij}x_\kappa^j = g^i$$ hold for almost all $\kappa<\lambda$. It
follows that  there is $\nu<\lambda$ such that the constant
functions
$$y^1=(x_{\nu}^1)_{\kappa<\lambda},\dots,y^k=(x_{\nu}^k)_{\kappa<\lambda}\in G^\lambda$$
satisfy the equalities $\sum_{j=1}^{k} a_{ij}y^j =
(g^i)_{\kappa<\lambda}$ for all $i = 1, \dots , n$. Then the tuple
$$\left(y^1+G^{(\lambda)},\dots,y^k+G^{(\lambda)}\right)$$ represents a solution
for $\CS$ in $(H+G^{(\lambda)})/G^{(\lambda)}$.

Therefore $(H+G^{(\lambda)})/G^{(\lambda)}\cong G$ is pure in
$G^\lambda/G^{(\lambda)}$. But $G$ is pure-injective, hence
$(H+G^{(\lambda)})/G^{(\lambda)}$ is a direct summand in
$G^\lambda/G^{(\lambda)}$.

If we suppose $G\neq 0$ we obtain
$\Hom_R(G^\lambda/G^{(\lambda)},G)\neq 0$, a contradiction.
\end{proof}

\subsection*{Acknowledgement} I would like to thank to Ciprian Modoi and Phill
Schultz for illuminating discussions on subjects related to the
main result of this note. I am thankful to the referee for her/his
suggestions who helped me to improve the presentation of the
paper.

 \end{document}